\documentclass[uplatex]{amsart}
\usepackage[top=30truemm,bottom=30truemm,left=25truemm,right=25truemm]{geometry}



\usepackage{amsmath, amsthm, amssymb, mathtools, mathrsfs}
\usepackage{enumerate}
\usepackage[all]{xy}
\usepackage{array, booktabs}
\usepackage{float}
\usepackage{extarrows}
\usepackage{tabularx}
\usepackage{graphics}

\newcommand{\iu}{\sqrt{-1}}

\newcommand{\Z}{\mathbb{Z}}
\newcommand{\Q}{\mathbb{Q}}
\newcommand{\R}{\mathbb{R}}
\newcommand{\C}{\mathbb{C}}
\newcommand{\bbH}{\mathbb{H}}
\newcommand{\bbP}{\mathbb{P}}

\newcommand{\veps}{\varepsilon}

\DeclareMathOperator{\ImNew}{Im}

\DeclareMathOperator{\SL}{SL}

\DeclareMathOperator{\GL}{GL}


\def\i<#1>{\langle #1 \rangle}
\def\l<#1>{\left\langle #1 \right\rangle}
\makeatletter
\renewcommand{\theenumi}{{\upshape (\@roman\c@enumi)}}

\renewcommand{\p@enumii}{}
\renewcommand{\theenumii}{{\upshape (\@alph\c@enumii)}}

\makeatother

\theoremstyle{plain}
\newtheorem{theorem}{Theorem}[section]
\newtheorem{definition}[theorem]{Definition}
\newtheorem{remark}[theorem]{Remark}
\newtheorem{proposition}[theorem]{Proposition}
\newtheorem{lemma}[theorem]{Lemma}

\newtheorem*{notation*}{Notation}

\begin{document}

\title{A continuity of cycle integrals of modular functions}

\author{Yuya Murakami}
\date{\today}
\thanks{Mathematical Institute, Tohoku University, 6-3, Aoba, Aramaki, Aoba-Ku, Sendai 980-8578, JAPAN
		\textit{E-mail address}: \texttt{yuya.murakami.s8@dc.tohoku.ac.jp}} 
\keywords{elliptic modular functions; cycle integrals; $ j $-invariant; values at real quadratic numbers; continued fractions.}

\maketitle

\begin{abstract}
	In this paper we study a continuity of the ``values" of modular functions at the real quadratic numbers which are defined in terms of their cycle integrals along the associated closed geodesics. 
	Our main theorem reveals a more finer structure of the continuity of these values with respect to continued fraction expansions and it turns out that it is different from the continuity with respect to Euclidean topology.
\end{abstract}


\section{Introduction} \label{sec:intro}


Let 
$ \bbH := \{ z=x+y\iu \in \C \mid x, y \in \R, y>0 \} $
be the upper half plane and
$ f \colon \bbH \to \C $ be a holomorphic modular function with respect to $ \SL_2(\Z) $.
Kaneko \cite{Kan} introduced  the ``value" of $ f $ at any real quadratic number $ w $ as follows.
For such a $ w $ and its non-trivial Galois conjugate $ w' $ we define
\[
\eta_w := \bigg( \dfrac{1}{z - w'} - \dfrac{1}{z - w} \bigg) dz.
\]
Let $ \SL_2(\Z)_w $ be the stabilizer of $ w $ and
$ \gamma_w = \begin{pmatrix} * & * \\ c & d \end{pmatrix} $ be the unique element of $ \SL_2(\Z)_w $ such that 
$ \SL_2(\Z)_w = \{ \pm \gamma_w^n \mid n \in \Z \}, \veps_w := j(\gamma_w, w) := cw + d > 1 $.
For $ z_0 \in  \bbH $ we define
\[
\widetilde{f}(w) := \int_{z_0}^{\gamma_w z_0} f \eta_w, \  \widetilde{1}(w) := \int_{z_0}^{\gamma_w z_0} \eta_w.
\]
The integral is independent of $ z_0 $ since $ f \circ \gamma_w = f, \gamma_w^* \eta_w = \eta_w $ and $ f $ is holomorphic.
We set
\[
f(w) := \frac{\widetilde{f}(w)}{\widetilde{1}(w)}.
\]

This value has a representation as a cycle integral
\[
f(w) = \frac{1}{\mathrm{length}(\SL_2(\Z)_w \backslash S_{w})} \int_{\SL_2(\Z)_w \backslash S_{w}} f ds
\]
where $ ds := y^{-1} \sqrt{dx^2 + dy^2} $, $ S_w $ is the geodesic in $ \bbH $ from $ w' $ to $ w $ and
\[
\mathrm{length}(\SL_2(\Z)_w \backslash S_{w}) := \int_{\SL_2(\Z)_w \backslash S_{w}} ds
= \widetilde{1}(w).
\]

The values $ f(w) $ and  $ \widetilde{f}(w) $ have been studied in the last decade.
Duke, Imamo\={g}lu and T\'{o}th \cite{DIT} proved that a generating function of traces of these values is a mock modular form of weight $ 1/2 $ for $ \Gamma_0(4) $.
This work is a real quadratic analog of Zagier's work on traces of singular moduli in \cite{Zag}. 
Masri \cite{Mas} studied equidistribution of traces of these values.
P\"{a}pcke \cite{Pap} showed the interesting formula
\[
\lim_{n \to \infty} j_1\left( \frac{n+ \sqrt{n^2 - 4}}{2} \right) = 0.
\]
We can regard it as a certain continuity of $ f(w) $.
We remark that the real quadratic numbers in the left hand side have the continued fraction expansions
\[
\frac{n+ \sqrt{n^2 - 4}}{2}
= n + \cfrac{1}{n + {\cfrac{1}{n + \cfrac{1}{\ddots} } } }.
\]

Kaneko \cite{Kan} conjectured another continuity of $ f(w) $ when $ w $ is a Markov quadratic number and this conjecture is proved by Bengoechea and Imamo\={g}lu \cite{BI}.
In this paper, we pursue more finer structure in the continuity with respect to the continued fraction expansions.
Our main result is to show that the function $ f(w) $ has a continuity with respect to how many cyclic parts appear in a continued fraction expansion of $ w $.

Each real number can be represented by the unique continued fraction
\[
[k_1, k_2, k_3, \dots] := k_1 + \cfrac{1}{k_2 + {\cfrac{1}{k_3 + \cfrac{1}{\ddots} } } }
\]
where $ k_1 $ is an integer and $ k_2, k_3, \dots $ are positive integers.
Also we set a periodic continued fraction
\[
[k_1, \dots, k_{r}, \overline{k_{r+1}, \dots, k_s}] := [k_1, \dots, k_{r}, k_{r+1}, \dots, k_s, k_{r+1}, \dots, k_s, k_{r+1}, \dots, k_s, \dots].
\]
For a real number, being quadratic is equivalent to that it has a periodic continued fraction expansion.

For an even number $ r \in 2\Z_{\ge 0} $ and positive integers $ k_1, \dots, k_{r} $, we call $ W=(k_1, \dots, k_{r}) $ an even word.
For even words $ W_1 = (k_1^{(1)}, \dots, k_{r_1}^{(1)}), \dots, W_n = (k_1^{(n)}, \dots, k_{r_n}^{(n)}) $, define its product
\[
W_1 \cdots W_n :=(k_1^{(1)}, \dots, k_{r_1}^{(1)}, \dots, k_1^{(n)}, \dots, k_{r_n}^{(n)}).
\]
For an even word $ W = (k_1, \dots, k_{r}) $, let 
\[
[\overline{W}] := [\overline{k_1, \dots, k_r}],
\]
\[
N(W) := \max \{ n \in \Z_{>0} \mid  \exists \text{ an even word } W_1 \text{ s.t. } W = W_1^n \}.
\]
Here, if $ W = W_1^{N(W)} $, then $ N(W_1) = 1 $.
For example, $ N((1,2,1,2)) = 2, N((1, 2)) = 1 $ and $ N((1,1)) = 1 $.

Our main result is the following:

\begin{theorem} \label{thm:main}
	For sequences $\{ a_n^{(1)} \}_{n=1}^{\infty}, \dots, \{ a_n^{(k)} \}_{n=1}^{\infty}$ in $ \Z_{\ge 0} $ diverging to the positive infinity, let
	\[
	a^{(i)} := \lim_{n \to \infty} \frac{a_n^{(i)}}{a_n^{(1)} + \cdots + a_n^{(k)}}, \,
	1 \le i \le k.
	\]
	For even words $V_0, \dots, V_k $ and non-empty even words $ W_1, \dots, W_k$, let
	\[
	w_i := [\overline{W_i}], \  
	w^{(n)} := [\overline{V_0 W_1^{a_n^{(1)}} V_1 W_2^{a_n^{(2)}} V_2 \dots W_k^{a_n^{(k)}} V_k}].
	\]
	Then we have
	\[
	\lim_{n \to \infty} f(w^{(n)}) 
	= \frac{a^{(1)} N(W_1) \widetilde{f} (w_1) + \cdots + a^{(k)} N(W_k) \widetilde{f} (w_k)}{a^{(1)} N(W_1) \widetilde{1} (w_1) + \cdots + a^{(k)} N(W_k) \widetilde{1} (w_k)}.
	\]
\end{theorem}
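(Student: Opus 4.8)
The plan is to realize each cycle integral $\widetilde{f}(w^{(n)})$, $\widetilde{1}(w^{(n)})$ as a sum over the letters of its period, to prove that the summands lying deep inside a repeated block $W_i^{a_n^{(i)}}$ converge to the cycle integrals attached to $w_i$, and to control the transitional summands by a geometric estimate. First I would set up the standard dictionary between continued fractions and matrices. Putting $g_k := \begin{pmatrix} k & 1 \\ 1 & 0 \end{pmatrix}$ and $M_{(k_1,\dots,k_r)} := g_{k_1}\cdots g_{k_r}$, an even word gives $\det = 1$, so $M_W \in \SL_2(\Z)$; concatenation of words corresponds to multiplication of matrices, and $[\overline{W}]$ is the attracting fixed point of $M_W$. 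If $W = P^{N(W)}$ with $P$ primitive, then $M_P$ is the primitive hyperbolic generator, i.e. $\gamma_{[\overline{W}]} = M_P$ and $M_W = \gamma_{[\overline{W}]}^{\,N(W)}$. In particular $M_{w^{(n)}} = M_{V_0}\prod_{i=1}^k M_{W_i}^{a_n^{(i)}} M_{V_i}$ with $M_{W_i}^{a_n^{(i)}} = \gamma_{w_i}^{\,a_n^{(i)}N(W_i)}$.

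Next I would decompose the integral. Factor $M_{w^{(n)}} = F_1\cdots F_L$, where the $F_\ell$ are the single matrices $M_{V_j}$ and, inside each block $W_i^{a_n^{(i)}}$, the $a_n^{(i)}N(W_i)$ successive copies of $\gamma_{w_i}$. Set $R_\ell := F_1\cdots F_\ell$ and telescope the path $z_0 \to M_{w^{(n)}}z_0$ into the arcs $R_{\ell-1}z_0 \to R_\ell z_0$. Using the invariance $f\circ R_{\ell-1} = f$ together with the transformation law $\gamma^*\eta_w = \eta_{\gamma^{-1}w}$ (a one-line computation from $\eta_w = d\log\frac{z-w'}{z-w}$, valid for any $\gamma\in\SL_2(\Z)$ because Galois conjugation commutes with rational Möbius maps), each arc becomes
\[
\int_{R_{\ell-1}z_0}^{R_\ell z_0} f\,\eta_{w^{(n)}} = \int_{z_0}^{F_\ell z_0} f\,\eta_{R_{\ell-1}^{-1}w^{(n)}},
\]
so that $\widetilde{f}(w^{(n)}) = \sum_{\ell=1}^L \int_{z_0}^{F_\ell z_0} f\,\eta_{R_{\ell-1}^{-1}w^{(n)}}$, and likewise for $\widetilde{1}$. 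For a step lying in block $i$ we have $F_\ell = \gamma_{w_i}$ and $\int_{z_0}^{\gamma_{w_i}z_0} f\,\eta_{w_i} = \widetilde{f}(w_i)$, so everything reduces to comparing $\eta_{R_{\ell-1}^{-1}w^{(n)}}$ with $\eta_{w_i}$.

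The analytic core is this comparison. Stripping the prefix coded by $R_{\ell-1}$ shifts the purely periodic continued fraction; if the step is the $m$-th copy of $\gamma_{w_i}$ inside block $i$, then the forward expansion of $R_{\ell-1}^{-1}w^{(n)}$ begins with the remaining $a_n^{(i)}N(W_i)-m$ copies of the primitive period $P_i$, while its Galois conjugate—governed by the reversed period—begins with about $m$ copies. Since every partial quotient comes from the finite alphabet of letters occurring in $V_0,\dots,V_k,W_1,\dots,W_k$, the classical exponential convergence of continued fractions gives $R_{\ell-1}^{-1}w^{(n)} \to w_i$ and $(R_{\ell-1}^{-1}w^{(n)})' \to w_i'$ at a rate $O(\rho^{\min(m,\,a_n^{(i)}N(W_i)-m)})$ with a fixed $\rho\in(0,1)$. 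As $z_0\to\gamma_{w_i}z_0$ is a fixed compact arc in $\bbH$ while the poles of these $\eta$ lie at uniformly bounded real points, $\eta_{R_{\ell-1}^{-1}w^{(n)}}$ is uniformly bounded on the arc and converges to $\eta_{w_i}$ at the same rate. The transitional errors are therefore geometrically summable ($\sum_{m\ge1}\rho^{\min(m,M-m)}$ is bounded uniformly in the block length $M$), and the finitely many $V_j$-steps contribute $O(1)$; hence
\[
\widetilde{f}(w^{(n)}) = \sum_{i=1}^k a_n^{(i)}N(W_i)\,\widetilde{f}(w_i) + O(1),\qquad \widetilde{1}(w^{(n)}) = \sum_{i=1}^k a_n^{(i)}N(W_i)\,\widetilde{1}(w_i) + O(1).
\]

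Finally I would divide numerator and denominator by $S_n := a_n^{(1)}+\cdots+a_n^{(k)}$, which tends to $+\infty$; since $a_n^{(i)}/S_n\to a^{(i)}$ the $O(1)$ terms become $O(1/S_n)\to 0$ and the claimed limit drops out, the limiting denominator $\sum_i a^{(i)}N(W_i)\widetilde{1}(w_i)$ being strictly positive because each $\widetilde{1}(w_i)$ is a geodesic length and $\sum_i a^{(i)} = 1$ forces some $a^{(i)}>0$. I expect the main obstacle to be the uniform estimate of the third paragraph: one must control the shifted number and its Galois conjugate simultaneously—equivalently the forward and backward expansions—with a contraction rate independent of $n$, and one must pin down the orientation and primitivity conventions so that a deep-interior step really equals $\widetilde{f}(w_i)$ and the multiplicity $N(W_i)$ correctly counts primitive periods. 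The bookkeeping at the block boundaries, where a step straddles the join between $W_i^{a_n^{(i)}}$ and $V_i$, is the delicate part of making the $O(1)$ rigorous.
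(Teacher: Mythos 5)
Your proposal is correct and follows essentially the same route as the paper: telescoping the cycle integral along the word via $\gamma^*\eta_w=\eta_{\gamma^{-1}w}$ (the paper's Lemma \ref{lem:int_trans}), controlling both the shifted quadratic and its Galois conjugate through the exponential convergence of shared continued-fraction prefixes and reversed periods (Lemmas \ref{lem:abs_eval} and \ref{lem:Cauchy_seq}), and dividing the resulting $\sum_i a_n^{(i)}N(W_i)\widetilde{f}(w_i)+O(1)$ asymptotics. The only differences are organizational: you telescope copy-by-copy and sum a geometric series directly where the paper telescopes block-by-block and deduces boundedness from a Cauchy-sequence argument, and you drop the harmless factor $N(W^{(n)})$ that cancels in the ratio anyway.
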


\begin{remark}
	\begin{enumerate}
		\item For the case when $k=1, a_n^{(1)} = n$, our main theorem says that
		\[
		\lim_{n \to \infty} f([\overline{V_0 W_1^n V_1}]) = f(w_1).
		\]
		In particular, if $V_0, V_1, W_1$ is products of $(1,1), (2,2)$ and $V_0 = \emptyset$ or $V_1 = \emptyset$, then this result is conjectured in {\upshape \cite{Kan}} and proved in {\upshape \cite{BI}}.
		Here a sequence $ [\overline{V_0 W_1^n V_1}] $ converges to $ \gamma_{V_0}^{} w_1 = [V_0 \overline{W_1}] $ and its non-trivial Galois conjugate converges to $ w_1' $ in the sense of Euclidean metric by Lemma $ 1.28 $ in {\upshape \cite{Aig}}.
		\item For the case when $k=2, a_n^{(1)} = a_n^{(2)} = n $ and $ V_0 = V_1 = V_2 = \emptyset $, our main theorem says that
		\[
		\lim_{n \to \infty} f([\overline{W_1^n W_2^n}]) 
		= \frac{ \widetilde{f} (w_1) + \widetilde{f} (w_2)}{ \widetilde{1} (w_1) + \widetilde{1} (w_2) }.
		\]
		Here a sequence $ [\overline{W_1^n W_2^n}] $ converges to $ w_1 $ and its non-trivial Galois conjugate converges to $ w_2' $ in the sense of Euclidean metric by Lemma $ 1.28 $ in {\upshape \cite{Aig}}.
		Since $ \widetilde{f} (w_2) = \widetilde{f} (w_2') $ and $ \widetilde{1} (w_2) = \widetilde{1} (w_2') $ by a proof of Proposition  {\upshape (ii)} in {\upshape \cite{Kan}}, 
		we can reconstruct the limit of $  f([\overline{W_1^n W_2^n}])  $ from the limits of $ [\overline{W_1^n W_2^n}] $ and $ [\overline{W_1^n W_2^n}]' $ in the sense of Euclidean metric.
		\item For the case when $k=3, a_n^{(1)} = a_n^{(2)} = a_n^{(3)} = n $ and $ V_0 = V_1 = V_2 = V_3 = \emptyset $, our main theorem says that
		\[
		\lim_{n \to \infty} f([\overline{W_1^n W_2^n W_3^n}]) 
		= \frac{ \widetilde{f} (w_1) + \widetilde{f} (w_2) + \widetilde{f} (w_3)}{ \widetilde{1} (w_1) + \widetilde{1} (w_2) + \widetilde{1} (w_3) }.
		\]
		Here a sequence $ [\overline{W_1^n W_2^n W_3^n}] $ converges to $ w_1 $ and its non-trivial Galois conjugate converges to $ w_3' $ in the sense of Euclidean metric by Lemma $ 1.28 $ in {\upshape \cite{Aig}}.
		Thus we cannot reconstruct $ w_2 $ from $ [\overline{W_1^n W_2^n W_3^n}] $ by Euclidean metric 
		and the function $ f $ on the set of real quadratic numbers does not satisfy the continuity with respect to Euclidean metric.
	\end{enumerate}
\end{remark}

\begin{remark}
	We set the sign of $ \eta_w $ in the definition because of the fact $ \widetilde{1}(w) = 2 \log \veps_w $.
	We prove it in section $ \ref{sec:pre} $, although we do not need it to prove our main theorem. 
\end{remark}

%
%
%
%
%
%


\section{Preliminaries} \label{sec:pre}


In this section, we prepare several notation.

\begin{definition}
	For non-empty even words $ V= (k_1, \dots, k_r), W=(\ell_1, \dots, \ell_s) $, define a real quadratic number
	\[
	[V \overline{W}] := [k_1, \dots, k_r, \overline{\ell_1, \dots, \ell_s}].
	\]
\end{definition}

\begin{definition}
	For a non-empty even word $ W=(k_1, \dots, k_{r}) $, define
	\[
	\gamma_W^{} := \begin{pmatrix} k_1 & 1 \\ 1 & 0 \end{pmatrix} \begin{pmatrix} k_2 & 1 \\ 1 & 0 \end{pmatrix} \dots \begin{pmatrix} k_r & 1 \\ 1 & 0 \end{pmatrix}
	\in \SL_2(\Z).
	\]
	
	For the empty word $ \emptyset $, define
	\[
	\gamma_\emptyset^{} := \begin{pmatrix} 1 & 0 \\ 0 & 1 \end{pmatrix}.
	\]
\end{definition}

\begin{lemma} \label{lem:gamma_W}
	We have the following properties.
	\begin{enumerate}
		\item \label{item:lem:gamma_W;1} For even words $ W_1, \dots, W_n $, we have $ \gamma_{W_1 \cdots W_n }^{} = \gamma_{W_1}^{} \dots \gamma_{W_n}^{} $.
		\item \label{item:lem:gamma_W;2} For even words $ V= (k_1, \dots, k_r), W=(\ell_1, \dots, \ell_s) $ and a real quadratic number $ w= [\overline{W}] $,
		we have $ \gamma_V^{} w = [V \overline{W}] $.
		
		In particular, $ \gamma_W^{} w = w $.
		\item \label{item:lem:gamma_W;3} For a reduced real quadratic number $w$, there exists the minimal even word $ W $ such that $ w = [\overline{W}] $.
		Then we have $ N(W)=1 $.
		In this case, $ \gamma_W^{} = \gamma_{w} $.
		
		Generally, for a non-empty even word $ W $ and a real quadratic number $ w = [\overline{W}] $,
		we have $ \gamma_W^{} = \gamma_{w}^{N(W)} $.
		\item \label{item:lem:gamma_W;4} For a real quadratic number $w$, there exist even words $ V, W $ such that $ w = [V \overline{W}] $.
		Then we have $ \gamma_V^{} \gamma_W^{} \gamma_V^{-1} = \gamma_w^{N(W)} $.
	\end{enumerate}
\end{lemma}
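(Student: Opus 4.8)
The plan is to reduce every assertion to two elementary facts: the single-letter matrix $\begin{pmatrix} k & 1 \\ 1 & 0 \end{pmatrix}$ acts on $\bbH$ by $z \mapsto k + 1/z$, and $j(\gamma, z) = cz+d$ is a cocycle, i.e. $j(\gamma\delta, z) = j(\gamma, \delta z)\, j(\delta, z)$. Part \eqref{item:lem:gamma_W;1} is then immediate from the definition, since $\gamma_W$ is the ordered product of the single-letter matrices attached to the letters of $W$ and concatenation of words corresponds exactly to multiplication of these products. For part \eqref{item:lem:gamma_W;2} I would induct on the length of $V=(k_1,\dots,k_r)$: applying the letters of $V$ from right to left to $w=[\overline{W}]=[\ell_1,\ell_2,\dots]$ via $\begin{pmatrix} k & 1 \\ 1 & 0\end{pmatrix}\cdot z = k+1/z$ prepends them to the continued fraction, giving $\gamma_V w = [k_1,\dots,k_r,\overline{\ell_1,\dots,\ell_s}] = [V\overline{W}]$. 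Taking $V=W$ yields $\gamma_W w = [W\overline{W}] = [\overline{W}] = w$, as prepending one period to a purely periodic expansion does not change its value; in particular $w$ is a fixed point of the Möbius transformation $\gamma_W$.

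Part \eqref{item:lem:gamma_W;3} is the core. For reduced $w$ the existence of a purely periodic expansion $w=[\overline{P}]$ with primitive period $P$ is classical (Galois); the minimal even word $W$ is $P$ if $|P|$ is even and $PP$ if $|P|$ is odd, and every even word $W'$ with $[\overline{W'}]=w$ is a power $W_0^m$ of this minimal $W_0$. Minimality forces $N(W_0)=1$: if $W_0=W_1^n$ with $n\ge 2$ and $W_1$ even, then $[\overline{W_1}]=w$ with $W_1$ shorter, a contradiction. To identify $\gamma_{W_0}$ with $\gamma_w$, note that $\gamma_{W_0}$ fixes $w$ by part \eqref{item:lem:gamma_W;2}; since $w=[\overline{W_0}]$ is the attracting fixed point of the hyperbolic map $\gamma_{W_0}$, its multiplier gives $|j(\gamma_{W_0},w)|=|cw+d|>1$, and because a product of single-letter matrices with positive entries has nonnegative entries while $w>1>0$, we get $cw+d>1$.

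Thus $\gamma_{W_0}\in\SL_2(\Z)_w$ with $j(\gamma_{W_0},w)>1$, and the genuine point is that it is primitive, i.e. generates $\SL_2(\Z)_w/\{\pm I\}$; this is the standard correspondence between the minimal period of the continued fraction and the fundamental automorph, and it is the step I expect to be the main obstacle. Granting primitivity we get $\gamma_{W_0}=\gamma_w$, and for a general non-empty even $W$ with $w=[\overline{W}]$ the uniqueness of the primitive root gives $W=W_0^{N(W)}$, so part \eqref{item:lem:gamma_W;1} yields $\gamma_W = \gamma_{W_0}^{N(W)} = \gamma_w^{N(W)}$.

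Finally, for part \eqref{item:lem:gamma_W;4}, the existence of $V,W$ follows from the eventual periodicity of continued fraction expansions (padding to make both words even). Write $w=[V\overline{W}]=\gamma_V w_0$ with $w_0:=[\overline{W}]$, which is automatically reduced. Conjugation by $\gamma_V$ is an isomorphism from $\SL_2(\Z)_{w_0}$ onto $\SL_2(\Z)_w$ carrying the generator $\gamma_{w_0}$ to a generator of $\SL_2(\Z)_w$; a short cocycle computation, using $j(\gamma_V,w_0)\, j(\gamma_V^{-1},\gamma_V w_0)=1$ together with $\gamma_{w_0}w_0=w_0$, shows $j(\gamma_V\gamma_{w_0}\gamma_V^{-1},w)=j(\gamma_{w_0},w_0)=\veps_{w_0}>1$, whence $\gamma_V\gamma_{w_0}\gamma_V^{-1}=\gamma_w$. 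Combining this with $\gamma_W=\gamma_{w_0}^{N(W)}$ from part \eqref{item:lem:gamma_W;3} gives $\gamma_V\gamma_W\gamma_V^{-1}=(\gamma_V\gamma_{w_0}\gamma_V^{-1})^{N(W)}=\gamma_w^{N(W)}$.
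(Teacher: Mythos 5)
Your proposal is correct and follows essentially the same route as the paper: parts (i)--(ii) from the definitions and the prepending action of $\begin{pmatrix} k & 1 \\ 1 & 0 \end{pmatrix}$, part (iii) from $\gamma_W \in \SL_2(\Z)_w$ together with positivity of entries giving $j(\gamma_W, w) > 1$ and minimality of the period, and part (iv) from the same cocycle computation $j(\gamma_V^{}\gamma_{w_0}\gamma_V^{-1}, w) = j(\gamma_{w_0}, w_0) > 1$. The one step you explicitly leave as ``granted'' --- that the minimal even word produces the fundamental automorph, i.e.\ a generator of $\SL_2(\Z)_w/\{\pm I\}$ --- is precisely the step the paper dispatches with ``by the minimality of $W$'', so you are no less complete than the source, and you in fact supply more detail on why $N(W_0)=1$ and on reducing the general case of (iii) to the minimal word.
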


\begin{proof}
	\ref{item:lem:gamma_W;1} and \ref{item:lem:gamma_W;2} follows by definition.
	For	\ref{item:lem:gamma_W;3}, it follows that $ N(W)=1 $ by the minimality of $ W $.
	We have $ \gamma_W^{} \in \SL_2(\Z)_{w} $ by \ref{item:lem:gamma_W;2} and $ j(\gamma_{W}, w) > 1 $ since $ w > 1 $ and all entries of $ \gamma_W^{} $ are positive.
	Thus $ \gamma_W^{} = \gamma_{w} $ by the minimality of $ W $.
	
	For \ref{item:lem:gamma_W;4}, since any real quadratic number has a periodic continued fraction expansion, one can find such $ V, W $ in the claim.
	Let $ w_0 := [\overline{W}] $.
	We have $ \gamma_V^{-1} w = w_0 $ by \ref{item:lem:gamma_W;1} and thus 
	$ \SL_2(\Z)_{w} = \gamma_V^{-1} \SL_2(\Z)_{w_0} \gamma_V^{} $.
	Here $ \gamma_{w}, \gamma_{w_0} $ are the unique generators of $ \SL_2(\Z)_{w}, \SL_2(\Z)_{w_0} $ such that
	$ j(\gamma_{w}, w) > 1, j(\gamma_{w_0}, w_0) > 1 $ respectively.
	Since
	\begin{align*}
		& j( \gamma_V^{-1} \gamma_{w_0} \gamma_V^{}, w) 
		= j( \gamma_V^{-1} \gamma_{w_0} \gamma_V^{}, \gamma_V^{-1} w_0)
		= j( \gamma_V^{-1}, \gamma_{w_0} \gamma_V^{} \gamma_V^{-1} w_0)
		j( \gamma_{w_0}, \gamma_V^{} \gamma_V^{-1} w_0)
		j( \gamma_V^{}, \gamma_V^{-1} w_0) \\
		&= j( \gamma_V^{-1}, w_0)
		j( \gamma_{w_0}, w_0)
		j( \gamma_V^{-1}, w_0)^{-1}
		= j( \gamma_{w_0}, w_0) >1,
	\end{align*}
	we have $  \gamma_V^{} \gamma_{w_0} \gamma_V^{-1} = \gamma_{w} $.
	By \ref{item:lem:gamma_W;3}, we obtain $ \gamma_V^{} \gamma_W^{} \gamma_V^{-1} = \gamma_w^{N(W)} $.
\end{proof}

\begin{definition}
	Set 
	\[
	\infty := \lim_{t \to + \infty} t \iu, \
	\bbP^1(\R) := \R \cup \{ \infty\}.
	\]
	For $ x', x \in \bbP^1(\R) $, define
	\[
	\eta_{x', x} (z) := \bigg( \dfrac{1}{z - x'} - \dfrac{1}{z - x} \bigg) dz.
	\]
	Here, let $ 1/(z - \infty) := 0 $ if $ x = \infty $ or $ x' = \infty $.
\end{definition}

The following lemma follows from a direct calculation and there details are omitted.

\begin{lemma} \label{lem:eta}
	For any $ x, x' \in \bbP^1(\R) $ and $\gamma \in \SL_2(\R)$, we have 
	$ \gamma^* \eta_{x', x} = \eta_{\gamma^{-1}x', \gamma^{-1}x}$.
	In particular, we have $\gamma^* \eta_w = \eta_{\gamma^{-1}w}$ for a real quadratic number $w$ and $\gamma \in \SL_2(\Z)$.
\end{lemma}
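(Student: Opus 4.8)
The plan is to observe that $\eta_{x',x}$ is the logarithmic differential $\eta_{x',x} = d\log\frac{z-x'}{z-x}$, so that pullback commutes with $d\log$ and any constant prefactor produced by the substitution $z \mapsto \gamma z$ is automatically annihilated. Writing $\gamma = \begin{pmatrix} a & b \\ c & d\end{pmatrix}$, the first step is the factorization
\[
\gamma z - x' = \frac{(a - cx')\,(z - \gamma^{-1}x')}{cz+d},
\]
which one checks by clearing denominators and using $\gamma^{-1}x' = (dx' - b)/(a - cx')$; the same identity holds with $x$ in place of $x'$.

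Second, I would form the quotient
\[
\frac{\gamma z - x'}{\gamma z - x} = \frac{a - cx'}{a - cx}\cdot\frac{z - \gamma^{-1}x'}{z - \gamma^{-1}x},
\]
in which the factor $cz+d$ cancels. Applying $d\log$ and discarding the constant ratio $(a-cx')/(a-cx)$ then gives
\[
\gamma^*\eta_{x',x} = d\log\frac{\gamma z - x'}{\gamma z - x} = d\log\frac{z - \gamma^{-1}x'}{z - \gamma^{-1}x} = \eta_{\gamma^{-1}x',\,\gamma^{-1}x},
\]
which is the asserted identity. For the \emph{in particular} clause, I would note that for $\gamma \in \SL_2(\Z)$ the (rational) Möbius action commutes with Galois conjugation, so $\gamma^{-1}(w') = (\gamma^{-1}w)'$, whence $\eta_{\gamma^{-1}w',\,\gamma^{-1}w} = \eta_{\gamma^{-1}w}$.

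The only genuine care is the treatment of $\infty$: when $x$, $x'$, or one of $\gamma^{-1}x$, $\gamma^{-1}x'$ equals $\infty$, the convention $1/(z-\infty) := 0$ must be invoked and the factorization degenerates (for instance $a - cx' \to -c$ as $x' \to \infty$, and the pole at $\gamma^{-1}x'$ disappears precisely when $a - cx' = 0$). I expect this bookkeeping---splitting into the finitely many configurations in which an endpoint is infinite and verifying each by the same substitution---to be the main, though entirely routine, obstacle, which is presumably why the statement records that the details are omitted.
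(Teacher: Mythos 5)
Your proof is correct: the factorization $\gamma z - x' = (a-cx')(z-\gamma^{-1}x')/(cz+d)$ is right (it is equivalent to $\gamma^{-1}x' = (dx'-b)/(a-cx')$), the $cz+d$ factors cancel in the quotient, and $d\log$ kills the constant, with the $\infty$ cases handled by the degeneration you describe. The paper omits the proof entirely, stating only that it is a direct calculation, and your argument is precisely that calculation, cleanly organized via the logarithmic differential; no discrepancy to report.
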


\begin{definition}
For a non-empty even word $ W $ with $ w = [\overline{W}] $ and $ z_0 \in \bbH $, define
\[
\eta_W^{} := \eta_w, \  \widetilde{f}(W) := \int_{z_0}^{\gamma_{W}^{} z_0} f \eta_{W}^{}.
\]
\end{definition}

\begin{lemma} \label{lem:tilde_f}
	For a non-empty even word $ W $ with $ w = [\overline{W}] $, we have
	\[
	\widetilde{f}(W) = N(W) \widetilde{f}(w).
	\]
	Thus $ \widetilde{f}(W) $ is independent of $ z_0 $	and we have
	\[
	f(w) = \frac{\widetilde{f}(W)}{\widetilde{1}(W)}.
	\]
\end{lemma}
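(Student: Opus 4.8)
The plan is to reduce everything to the single relation $\gamma_W^{} = \gamma_w^{N(W)}$ supplied by Lemma \ref{lem:gamma_W}\ref{item:lem:gamma_W;3}, and then to exploit the $\gamma_w^{}$-invariance of the integrand together with additivity of path integrals. Writing $N := N(W)$ and $\gamma := \gamma_w^{}$, the defining integral becomes $\widetilde{f}(W) = \int_{z_0}^{\gamma^N z_0} f \eta_w^{}$, so the whole task is to show that this equals $N \int_{z_0}^{\gamma z_0} f \eta_w^{} = N \widetilde{f}(w)$.

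First I would observe that $f \eta_w^{}$ is a holomorphic $1$-form on $\bbH$: the modular function $f$ is holomorphic on $\bbH$, and the poles of $\eta_w^{}$ sit at the real points $w, w' \in \bbP^1(\R)$, hence away from $\bbH$. Since $\bbH$ is simply connected, the line integral of $f\eta_w^{}$ depends only on its endpoints and is additive along concatenated paths. This lets me split the path from $z_0$ to $\gamma^N z_0$ through the intermediate points $\gamma^m z_0$, giving
\[
\int_{z_0}^{\gamma^N z_0} f \eta_w^{} = \sum_{m=0}^{N-1} \int_{\gamma^m z_0}^{\gamma^{m+1} z_0} f \eta_w^{}.
\]

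Next I would establish the invariance of the integrand under $\gamma$. Because $f$ is a modular function for $\SL_2(\Z)$ we have $f \circ \gamma = f$, and because $\gamma = \gamma_w^{}$ stabilizes $w$, Lemma \ref{lem:eta} gives $\gamma^* \eta_w^{} = \eta_{\gamma^{-1} w}^{} = \eta_w^{}$. Hence $\gamma^*(f \eta_w^{}) = f \eta_w^{}$, and more generally $(\gamma^m)^*(f\eta_w^{}) = f\eta_w^{}$ for every $m$. Applying the change of variables $z = \gamma^m u$ to the $m$-th summand turns it into $\int_{z_0}^{\gamma z_0} (\gamma^m)^*(f\eta_w^{}) = \int_{z_0}^{\gamma z_0} f\eta_w^{} = \widetilde{f}(w)$, so every segment contributes the same value $\widetilde{f}(w)$ and the sum is $N\widetilde{f}(w) = N(W)\widetilde{f}(w)$, as claimed.

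Finally, independence of $z_0$ follows immediately, since $\widetilde{f}(w)$ is already known to be independent of $z_0$ (as recorded in the introduction: $f\circ\gamma_w^{} = f$, $\gamma_w^* \eta_w^{} = \eta_w^{}$, and $f$ is holomorphic), and $\widetilde{f}(W) = N(W)\widetilde{f}(w)$ is a fixed multiple of it. Applying the displayed formula simultaneously to $f$ and to the constant function $1$ yields $\widetilde{f}(W) = N(W)\widetilde{f}(w)$ and $\widetilde{1}(W) = N(W)\widetilde{1}(w)$, whence the factors $N(W)$ cancel and $\widetilde{f}(W)/\widetilde{1}(W) = \widetilde{f}(w)/\widetilde{1}(w) = f(w)$. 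The only point requiring a little care is the justification of the change of variables across segments, i.e. that pulling back the holomorphic form $f\eta_w^{}$ by $\gamma^m$ and reparametrizing the path is legitimate; this is exactly where simple connectivity of $\bbH$ and path-independence of the integral of a holomorphic form are used, and I expect this bookkeeping—rather than any genuine difficulty—to be the main thing to write out carefully.
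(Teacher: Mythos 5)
Your proposal is correct and follows essentially the same route as the paper's proof: both reduce to $\gamma_W^{} = \gamma_w^{N(W)}$ from Lemma \ref{lem:gamma_W} \ref{item:lem:gamma_W;3} and then use $f \circ \gamma_w = f$ together with $\gamma_w^* \eta_w = \eta_w$ from Lemma \ref{lem:eta} to telescope the integral into $N(W)$ equal pieces. You simply write out the path-splitting and change-of-variables bookkeeping that the paper leaves implicit.
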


\begin{proof}
	it follows that $ \gamma_W^{} = \gamma_{w}^{N(W)} $ by Lemma \ref{lem:gamma_W} \ref{item:lem:gamma_W;3},
	$ \gamma_w^* \eta_w = \eta_w $ by Lemma \ref{lem:eta} and $ f \circ \gamma_w = f $.
	Thus we have $ \widetilde{f}(W) = N(W) \widetilde{f}(w) $.
\end{proof}

For a real quadratic number $w$, there exist integers $ a, b, c $ such that
\[
aw^2 + bw + c = 0, \ \mathrm{gcd}(a,b,c) = 1, \ w = \frac{-b + \sqrt{b^2 - 4ac}}{2a}.
\]
Let $ D := b^2 - 4ac > 0 $ and $ K := \Q(\sqrt{D}) $.
We have a group isomorphism
\[
\begin{array}{ccccc}
\GL_2(\Z)_w & \xrightarrow{\sim} & \Z[aw]^\times \\
\begin{pmatrix} p & q \\ r & s \end{pmatrix}
& \longmapsto & rw+s \\
\begin{pmatrix} \frac{x - by}{2} & -cy \\ ay & \frac{x + by}{2} \end{pmatrix}
& \reflectbox{\ensuremath{\longmapsto}} & \frac{x + y \sqrt{D}}{2}
\end{array}
\]
such that the diagram
\[
\xymatrix{
	\GL_2(\Z)_w \ar[rr]^-{\sim}\ar[dr]_-{\det} & \ar@{}[d]|{\rotatebox{180}{\ensuremath{\circlearrowright}}} & \Z[aw]^\times \ar[dl]^-{N_{K/\Q}} \\
	& \{ \pm 1 \} &
}
\]
commutes.
Here the symbol $ N_{K/\Q} $ is the norm map $ N_{K/\Q} \colon K^\times \to \Q^\times $.
Thus we have a group isomorphism
\[
\SL_2(\Z)_w \cong \{ \veps \in \Z[aw]^\times \mid N_{K/\Q}(\veps) = 1 \}.
\]

The following is a fundamental property of $ S_w $.

\begin{proposition} \label{prop:tilde_1}
	For a real quadratic number $w$, we have $ \widetilde{1}(w) = 2 \log \veps_w $.
\end{proposition}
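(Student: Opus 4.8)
The plan is to evaluate the integral directly by producing an explicit antiderivative of $\eta_w$ on $\bbH$ and then exploiting that $\gamma_w$ fixes both $w$ and $w'$.

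First I would observe that, since $w, w' \in \R$, the form $\eta_w = \eta_{w',w}$ is holomorphic on the simply connected domain $\bbH$, so $\widetilde{1}(w) = \int_{z_0}^{\gamma_w z_0} \eta_w$ is path-independent and admits a single-valued holomorphic primitive there. Fixing one branch, I write $F(z) := \log\frac{z-w'}{z-w}$, so that $\widetilde{1}(w) = F(\gamma_w z_0) - F(z_0)$. The next step is to compute how $\gamma_w$ acts on the coordinate $\zeta(z) := \frac{z-w'}{z-w}$. Writing $\gamma_w = \begin{pmatrix} a & b \\ c & d \end{pmatrix}$ and using $\gamma_w w = w$, $\gamma_w w' = w'$, a short factorization gives $\gamma_w z - w = \frac{(a-cw)(z-w)}{cz+d}$ and the analogous identity for $w'$; hence $\zeta(\gamma_w z) = \lambda\, \zeta(z)$ with multiplier $\lambda = \frac{a-cw'}{a-cw}$.

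To evaluate $\lambda$ I would use that $\begin{pmatrix} w \\ 1 \end{pmatrix}$ and $\begin{pmatrix} w' \\ 1 \end{pmatrix}$ are eigenvectors of $\gamma_w$ with eigenvalues $cw+d = \veps_w$ and $cw'+d$; since $\det \gamma_w = 1$ the second eigenvalue is $\veps_w^{-1}$, and solving the two resulting linear relations yields $a - cw = \veps_w^{-1}$ and $a - cw' = \veps_w$. Thus $\lambda = \veps_w^2$, and exponentiating $\widetilde{1}(w) = F(\gamma_w z_0) - F(z_0)$ gives $e^{\widetilde{1}(w)} = \veps_w^2$, that is, $\widetilde{1}(w) \in 2\log\veps_w + 2\pi i\,\Z$.

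The only delicate point is to rule out a nonzero multiple of $2\pi i$, i.e.\ to pin down the branch accumulated along the path; I expect this to be the main obstacle, since all the algebra up to $e^{\widetilde{1}(w)} = \veps_w^2$ is routine. Here I would use path-independence to choose $z_0$ on the geodesic $S_w$ and integrate along $S_w$ itself, which $\gamma_w$ preserves because its endpoints $w, w'$ are precisely the fixed points of $\gamma_w$. Since $\zeta$ is a real Möbius map sending $w' \mapsto 0$ and $w \mapsto \infty$, it carries the semicircle $S_w$ onto the imaginary axis; hence $\zeta$ is purely imaginary all along $S_w$, so $\arg\zeta = \Im F$ is constant on the path and the imaginary parts of $F(\gamma_w z_0)$ and $F(z_0)$ cancel. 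This forces $\Im \widetilde{1}(w) = 0$, leaving $\widetilde{1}(w) = 2\log\veps_w$, which is positive since $\veps_w > 1$, as claimed.
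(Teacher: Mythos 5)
Your proof is correct and is essentially the same argument as the paper's: both pass to the coordinate $\zeta=(z-w')/(z-w)$ in which the fixed points of $\gamma_w$ sit at $0$ and $\infty$, $\eta_w$ becomes $d\zeta/\zeta$, and $\gamma_w$ acts by the multiplier $\veps_w^{2}$ (the paper realizes this via an explicit conjugating matrix $\sigma\in\SL_2(\R)$ together with its pullback lemma for $\eta_{x',x}$, whereas you work directly with the primitive $\log\zeta$ and an eigenvalue computation). The one point where you go beyond the paper is the explicit determination of the branch of the logarithm by placing $z_0$ on $S_w$, where $\arg\zeta$ is constant; the paper leaves this step implicit in its evaluation of $-\int_{z_1}^{\veps_w^{-2}z_1}dz/z$.
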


\begin{proof}
	Let 
	\[
	\veps_w = (x_0 + y_0)/2, \ 
	\sigma :=
	\begin{pmatrix} w' & w/(w'-w) \\ 1 & 1/(w'-w) \end{pmatrix} 
	\in \SL_2(\R).
	\]
	Then
	\[
	\gamma_w = 
	\begin{pmatrix} \frac{x_0 - by_0}{2} & -cy_0 \\ ay_0 & \frac{x_0 + by_0}{2} \end{pmatrix}, \ 
	\sigma^{-1} \gamma_w \sigma =
	\begin{pmatrix} \veps_w^{-1} & 0 \\ 0 & \veps_w \end{pmatrix}.
	\]
	
	Fix $ z_0 \in \bbH $ and let $ z_1 := \sigma^{-1} z_0 $.
	Since $ \sigma^* \eta_w = \eta_{\infty, 0} = - z^{-1} dz $ by Lemma \ref{lem:eta}, we have
	\begin{align*}
		\widetilde{1}(w) 
		= \int_{z_0}^{\gamma_w z_0} \eta_w 
		= \int_{\sigma z_1}^{\sigma \sigma^{-1} \gamma_w \sigma z_1} \eta_w 
		= \int_{z_1}^{\sigma^{-1} \gamma_w \sigma z_1} \sigma^* \eta_w 
		= - \int_{z_1}^{\veps_w^{-2} z_1} \frac{dz}{z}
		= 2 \log \veps_w.
	\end{align*}
\end{proof}


\section{A proof of our main theorem} \label{sec:conti}


Throughout this section, we fix a point $ z_0 \in \bbH $.
We start with some lemmas.

\begin{lemma} \label{lem:int_trans}
	For non-empty even words $W_1, \dots, W_k$, we have
	\[
	\widetilde{f}(W_1 \cdots W_k)
	= \sum_{1 \le i \le k} \int_{z_0}^{\gamma_{W_i}^{} z_0} f \eta_{W_i \cdots W_k W_1 \cdots W_{i-1}}^{}.
	\]
\end{lemma}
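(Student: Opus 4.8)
The plan is to split the single integral defining $\widetilde{f}(W_1\cdots W_k)$ into $k$ consecutive pieces and then transport each piece back to a path starting at $z_0$ by a modular substitution. First I set $g_0 := \gamma_\emptyset = I$ and $g_i := \gamma_{W_1}\cdots\gamma_{W_i}$ for $1\le i\le k$, so that $g_k = \gamma_{W_1\cdots W_k}$ by Lemma \ref{lem:gamma_W} \ref{item:lem:gamma_W;1}. Writing $w := [\overline{W_1\cdots W_k}]$, the form $f\eta_{W_1\cdots W_k} = f\eta_w$ is holomorphic on $\bbH$, since $f$ is holomorphic and the only poles of $\eta_w$ lie at the real points $w, w'$; hence its integral is path-independent and I may break the path from $z_0$ to $g_k z_0$ at the intermediate points $g_1 z_0, \dots, g_{k-1} z_0$:
\[
\widetilde{f}(W_1\cdots W_k) = \int_{z_0}^{g_k z_0} f\eta_w = \sum_{i=1}^{k} \int_{g_{i-1} z_0}^{g_i z_0} f\eta_w.
\]

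Next I would apply the substitution $z = g_{i-1}\zeta$ to the $i$-th summand. Since $g_{i-1}^{-1} g_i = \gamma_{W_i}$, this gives
\[
\int_{g_{i-1} z_0}^{g_i z_0} f\eta_w = \int_{z_0}^{\gamma_{W_i} z_0} g_{i-1}^{*}(f\eta_w) = \int_{z_0}^{\gamma_{W_i} z_0} (f\circ g_{i-1})\, g_{i-1}^{*}\eta_w,
\]
where $f\circ g_{i-1} = f$ because $g_{i-1}\in\SL_2(\Z)$ and $f$ is a modular function, and $g_{i-1}^{*}\eta_w = \eta_{g_{i-1}^{-1} w}$ by Lemma \ref{lem:eta}.

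The key step is to identify the transported base point $g_{i-1}^{-1} w$ as the cyclic shift of $w$, namely $g_{i-1}^{-1} w = [\overline{W_i\cdots W_k W_1\cdots W_{i-1}}]$. Setting $v_i := [\overline{W_i\cdots W_k W_1\cdots W_{i-1}}]$ and using Lemma \ref{lem:gamma_W} \ref{item:lem:gamma_W;1}, \ref{item:lem:gamma_W;2}, I would compute
\[
g_{i-1} v_i = \gamma_{W_1\cdots W_{i-1}} v_i = [W_1\cdots W_{i-1}\, \overline{W_i\cdots W_k W_1\cdots W_{i-1}}] = [\overline{W_1\cdots W_k}] = w,
\]
the middle equality being the observation that prepending $W_1\cdots W_{i-1}$ to the purely periodic expansion with period $W_i\cdots W_k W_1\cdots W_{i-1}$ reassembles the purely periodic expansion with period $W_1\cdots W_k$. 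Hence $g_{i-1}^{*}\eta_w = \eta_{v_i} = \eta_{W_i\cdots W_k W_1\cdots W_{i-1}}$, and substituting back into the displayed sum yields exactly the claimed formula.

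The only delicate points are the justification of path-independence, which rests on $f\eta_w$ being holomorphic on $\bbH$, and the combinatorial cyclic-shift identity above; both are routine given the lemmas already established, so I expect no serious obstacle.
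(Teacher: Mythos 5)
Your proposal is correct and follows essentially the same route as the paper's own proof: split the integral from $z_0$ to $\gamma_{W_1}\cdots\gamma_{W_k}z_0$ at the intermediate points $\gamma_{W_1}\cdots\gamma_{W_{i-1}}z_0$, pull each piece back by $(\gamma_{W_1}\cdots\gamma_{W_{i-1}})^*$ using modularity of $f$ and Lemma \ref{lem:eta}, and identify the transported differential via the cyclic-shift identity $(\gamma_{W_1}\cdots\gamma_{W_{i-1}})^{-1}[\overline{W_1\cdots W_k}]=[\overline{W_i\cdots W_kW_1\cdots W_{i-1}}]$. Your write-up merely makes explicit the path-independence and the verification of the cyclic-shift identity, which the paper states without elaboration.
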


\begin{proof}
	We have
	\[
	\widetilde{f} (W_1 \cdots W_k) 
	= \int_{z_0}^{\gamma_{W_1}^{} \cdots \gamma_{W_k}^{} z_0} f \eta_{W_1 \cdots W_k}
	\]
	by Lemma \ref{lem:gamma_W} \ref{item:lem:gamma_W;1}.
	Thus we obtain
	\begin{align*}
	\widetilde{f} (W_1 \cdots W_k) 
	&= \sum_{1 \le i \le k} \int_{\gamma_{W_1}^{} \cdots \gamma_{W_{i-1}}^{} z_0}^{\gamma_{W_1}^{} \cdots \gamma_{W_{i}}^{} z_0} f \eta_{W_1 \cdots W_k} \\
	&= \sum_{1 \le i \le k} \int_{z_0}^{\gamma_{W_{i}}^{} z_0} f (\gamma_{W_{1}}^{} \cdots \gamma_{W_{i-1}})^* \eta_{W_1 \cdots W_k}.
	\end{align*}
	We have
	\[
	\widetilde{f} (W_1 \cdots W_k) 
	= \sum_{1 \le i \le k} \int_{z_0}^{\gamma_{W_i} z_0} f \eta_{W_i \cdots W_k W_1 \cdots W_{i-1}}.	
	\]
	by Lemma \ref{lem:eta} and the fact that
	\[
	(\gamma_{W_{1}}^{} \cdots \gamma_{W_{i-1}})^{-1} [\overline{W_1 \cdots W_k}] = [\overline{W_i \cdots W_k W_1 \cdots W_{i-1}}],
	\]
\end{proof}

\begin{lemma} \label{lem:abs_eval}
	Let $ K $ be a compact subset in $ \bbH $ and
	$ y_0 := \min \{ \ImNew(z) \mid z \in K \} $.
	For real quadratic numbers $ v $ and $ w $, suppose the continued fraction expansions of $ v $ and $ w $ share first $ n $ terms.
	Then we have
	\[
	\left| \frac{1}{z - v} - \frac{1}{z - w} \right| \le \frac{2^{2-n}}{y_0^2}
	\]
	for any $ z \in K $.
\end{lemma}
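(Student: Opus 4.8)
The plan is to reduce the estimate to two independent bounds and handle each separately. Putting the integrand difference over a common denominator gives the identity
\[
\frac{1}{z-v} - \frac{1}{z-w} = \frac{v-w}{(z-v)(z-w)},
\]
so that for every $ z \in K $ one has
\[
\left| \frac{1}{z-v} - \frac{1}{z-w} \right| = \frac{|v-w|}{|z-v|\,|z-w|}.
\]
Since $ v, w \in \R $ while $ z = x + y\iu \in \bbH $, each factor in the denominator satisfies $ |z-v| \ge \Im(z) \ge y_0 $ and likewise $ |z-w| \ge y_0 $. Hence it suffices to prove the purely arithmetic bound $ |v-w| \le 2^{2-n} $ coming from the shared initial segment of the two continued fractions.

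For that I would invoke the convergent machinery attached to the matrices $ \begin{pmatrix} k & 1 \\ 1 & 0 \end{pmatrix} $ already used in Section~\ref{sec:pre}. Writing $ k_1, \dots, k_n $ for the common initial partial quotients and
\[
\begin{pmatrix} p_n & p_{n-1} \\ q_n & q_{n-1} \end{pmatrix} := \begin{pmatrix} k_1 & 1 \\ 1 & 0 \end{pmatrix} \cdots \begin{pmatrix} k_n & 1 \\ 1 & 0 \end{pmatrix},
\]
the two tails $ \alpha := [k_{n+1}, k_{n+2}, \dots] $ and $ \beta := [k'_{n+1}, k'_{n+2}, \dots] $ are (infinite, since $ v, w $ are quadratic) real numbers strictly larger than $ 1 $, and one has $ v = (p_n \alpha + p_{n-1})/(q_n \alpha + q_{n-1}) $ and $ w = (p_n \beta + p_{n-1})/(q_n \beta + q_{n-1}) $. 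A direct subtraction, using that $ p_n q_{n-1} - p_{n-1} q_n $ equals the determinant $ (-1)^n $ of the above product, yields
\[
v - w = \frac{(-1)^n (\alpha - \beta)}{(q_n \alpha + q_{n-1})(q_n \beta + q_{n-1})}.
\]
Because $ \alpha, \beta > 1 $ and $ q_{n-1} \ge 0 $, each denominator factor exceeds $ q_n $, while $ |\alpha - \beta| < \max(\alpha, \beta) $; combining these gives $ |v - w| < 1/q_n^2 $.

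It then remains to bound $ q_n $ from below, which is the only quantitative point of the argument. From the recursion $ q_m = k_m q_{m-1} + q_{m-2} $ with all $ k_m \ge 1 $ and the initial values $ q_0 = 0, q_1 = 1 $ one gets $ q_m \ge q_{m-1} + q_{m-2} $, so $ q_n $ dominates the Fibonacci sequence; a short induction then gives $ q_n \ge 2^{(n-2)/2} $, whence $ 1/q_n^2 \le 2^{2-n} $ and thus $ |v - w| \le 2^{2-n} $. Feeding this into the displayed reduction finishes the proof. The main obstacle is purely the bookkeeping of the continued-fraction convergents: setting up the representation of $ v $ and $ w $ through the shared convergents and checking that both tails are genuinely $ > 1 $ (so the denominators are controlled, and the integer part $ k_1 $, which does not enter the $ q_m $, causes no trouble). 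Once the difference formula is in hand, the Fibonacci-type growth estimate is routine.
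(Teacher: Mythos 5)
Your proof is correct and follows the same reduction as the paper: bound the difference by $|v-w|/(|z-v||z-w|) \le |v-w|/y_0^2$ using $|z-a| \ge \Im(z) \ge y_0$, and then establish the arithmetic estimate $|v-w| \le 2^{2-n}$. The only difference is that the paper obtains the latter by citing Lemma 1.24 of Aigner, whereas you rederive it from the convergent representation $v = (p_n\alpha + p_{n-1})/(q_n\alpha + q_{n-1})$ and the Fibonacci-type lower bound $q_n \ge 2^{(n-2)/2}$; your derivation is sound (for $n \ge 1$, which is the only case used), so this is just a self-contained substitute for the citation.
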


\begin{proof}
	Fix $ z \in K $.
	Since $ |z-a| \ge \ImNew(z) \ge y_0 $ for any $ a \in \R $, we have
	\[
	\left| \frac{1}{z - v} - \frac{1}{z - w} \right|
	\le \frac{|v-w|}{|z - v| |z - w|}
	\le \frac{|v-w|}{y_0^2}.
	\]
	Also we get $ |v-w| \le 2^{2-n} $ by Lemma 1.24 in \cite{Aig}.
\end{proof}

\begin{lemma} \label{lem:Cauchy_seq}
	Let $W$ and $ W_1, \dots, W_n, \dots $ be non-empty even words such that $ W_n $ and $ W_{n+1} $ coincide in the first $ n $ terms and the last $ n $ terms.
	Let $\{ a_n \}_{n=1}^{\infty} $ be a sequence in $ \Z_{\ge 0} $ diverging to positive infinity.
	Set $ w := [\overline{W}] $.
	Then
	\[
	\left\{ \int_{z_0}^{\gamma_W^{} z_0} f \eta_{W_n}^{} \right\}_{n \ge 1}, \  
	\left\{ \int_{z_0}^{\gamma_W^{a_n} z_0} f \eta_{W^{a_n}W_n}^{} - a_n N(W) \widetilde{f} (w) \right\}_{n \ge 1}
	\]
	are Cauchy sequences.
	In particular, these sequences are bounded.
\end{lemma}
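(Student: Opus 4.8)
The plan is to run both sequences through a single device: the paths of integration are \emph{fixed} compact arcs and only the one-forms $\eta_\bullet$ move with $n$, while two reduced quadratics produce one-forms that are uniformly close on a compact set once they share many continued-fraction digits. First I would fix once and for all the two compact arcs $z_0\to\gamma_W^{}z_0$ and $z_0\to\gamma_w^{}z_0$, let $K$ be their union, $y_0:=\min\{\Im z\mid z\in K\}$, let $L$ be the larger of their lengths, and $M:=\max_{z\in K}|f|$ (finite since $f$ is holomorphic on $\bbH$). Writing $\eta_v=\eta_{v',v}$ and grouping poles gives $|\eta_v-\eta_w|\le y_0^{-2}(|v-w|+|v'-w'|)\,|dz|$ on $K$. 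Next I would record the principle: if reduced quadratics $v,w$ agree in their first $p$ forward digits and their conjugates agree in the first $q$ digits, then $|v-w|\le 2^{2-p}$ and $|v'-w'|\le 2^{2-q}$, whence $|\eta_v-\eta_w|\le 4y_0^{-2}(2^{-p}+2^{-q})|dz|$ on $K$. The first bound is Lemma~1.24 of \cite{Aig} (as used in Lemma~\ref{lem:abs_eval}); the second follows by applying the same bound to $-1/v',-1/w'$, using the classical description $-1/v'=[\overline{\rho(V)}]$ of the conjugate of a purely periodic $v=[\overline V]$, where $\rho(V)$ is $V$ in reverse order (since $-1/v',-1/w'>1$, passing to reciprocals does not increase the distance).

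For the first sequence I would note $\eta_{W_n}^{}=\eta_{w_n}$ with $w_n:=[\overline{W_n}]$. For $m>n$ the hypothesis says $W_n,W_m$ agree in their first $n$ and last $n$ digits, so $w_n,w_m$ share $n$ forward digits and $\rho(W_n),\rho(W_m)$ share $n$ digits, i.e.\ the conjugates share $n$ digits. The principle with $p=q=n$ gives $\bigl|\int_{z_0}^{\gamma_W^{}z_0}f(\eta_{W_n}-\eta_{W_m})\bigr|\le 8ML\,y_0^{-2}\,2^{-n}\to0$, so the first sequence is Cauchy.

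For the second sequence I would write $N:=N(W)$ and let $W_0$ be the primitive word with $W=W_0^N$, so $\gamma_w^{}=\gamma_{W_0}^{}$ and $\gamma_W^{a_n}=\gamma_w^{Na_n}$ by Lemma~\ref{lem:gamma_W}. Since $\gamma_w^*\eta_w=\eta_w$ (Lemma~\ref{lem:eta}) and $f\circ\gamma_w=f$, splitting the path into its $Na_n$ translates gives $\int_{z_0}^{\gamma_w^{Na_n}z_0}f\eta_w=Na_n\widetilde f(w)$, so the $n$-th term $T_n$ equals $\int_{z_0}^{\gamma_w^{Na_n}z_0}f(\eta_{w^{(n)}}-\eta_w)$ with $w^{(n)}:=[\overline{W_0^{Na_n}W_n}]$. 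Pulling each piece back by $\gamma_w^{-j}$ and using Lemma~\ref{lem:eta} yields
\[
T_n=\sum_{j=0}^{Na_n-1}g_n(j),\qquad g_n(j):=\int_{z_0}^{\gamma_w^{}z_0}f\bigl(\eta_{w^{(n)}_j}-\eta_w\bigr),
\]
where the shift identity of Lemma~\ref{lem:gamma_W} gives $w^{(n)}_j:=\gamma_w^{-j}w^{(n)}=[\overline{W_0^{\,Na_n-j}W_nW_0^{\,j}}]$. Letting $r_0$ be the length of $W_0$, comparison of forward digits of $w^{(n)}_j$ with $w=[\overline{W_0}]$ gives $p=(Na_n-j)r_0$, while reversal turns the trailing block $W_0^{\,j}$ into a leading $\rho(W_0)^{\,j}$ matching the conjugate of $w$, giving $q=jr_0$; hence $|g_n(j)|\le C\bigl(2^{-(Na_n-j)r_0}+2^{-jr_0}\bigr)$ with $C:=4MLy_0^{-2}$.

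To extract the Cauchy property I would run an $\epsilon/3$ split of this sum. Fixing a cutoff $P$ and taking $n$ so large that $Na_n>2P$, I break $\sum_j g_n(j)$ into a head $0\le j<P$, a tail $Na_n-P\le j<Na_n$, and a middle. The geometric bound makes the middle at most $2C(1-2^{-r_0})^{-1}2^{-Pr_0}$ \emph{uniformly in $n$}, hence $<\epsilon/3$ for $P$ large. Each of the finitely many head terms ($j$ fixed) is Cauchy in $n$: its forward agreement with the fixed number $w$ is $(Na_n-j)r_0\to\infty$, while the conjugate is governed by $\rho(W_n)$, which stabilises in its first $n$ digits, so the integrands converge uniformly on $K$. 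Symmetrically, after reindexing $i=Na_n-j$, each tail term $g_n(Na_n-i)$ ($i$ fixed) is Cauchy in $n$: now the conjugate agreement with $w'$ grows, while the forward part is governed by the leading block $W_0^{\,i}W_n$, whose first $ir_0+n$ digits stabilise. Choosing $N_0$ so that for $m,n\ge N_0$ the head and tail blocks each vary by $<\epsilon/3$ gives $|T_m-T_n|<\epsilon$. The main obstacle is precisely this last step: one must use that the $W_n$ stabilise in both their first $n$ \emph{and} their last $n$ digits, so that the head terms (forward convergence to $w$ plus stabilisation of $\rho(W_n)$) and the tail terms (conjugate convergence to $w'$ plus stabilisation of $W_n$) each settle down, whereas the two-sided geometric decay kills the middle uniformly in $n$.
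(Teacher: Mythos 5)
Your proposal is correct, and it rests on the same machinery as the paper's own proof: a fixed compact contour with only the one-forms varying, the splitting of the long path into unit translates via $f\circ\gamma_w=f$ and Lemma~\ref{lem:eta}, and the $2^{2-n}$ digit estimates of \cite{Aig} applied both to the forward expansions and, through word reversal, to the conjugates. The one genuine difference is how the Cauchy property of the second sequence is extracted. The paper computes $B_n-B_m$ directly and cross-pairs the terms of the two sums against \emph{each other}: forward poles are matched by the exponent of the leading $W$-power (the sum $S_1$), conjugate poles by the exponent of the trailing power (the sum $S_1'$), and only the surplus terms $a_m<i\le a_n$ are compared to $w$ and $w'$; this yields in one stroke an explicit bound $|B_n-B_m|\ll 2^{-m}+2^{-a_m}$. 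You instead compare every term to the limit form $\eta_w$, and since the resulting two-sided bound $2^{-(Na_n-j)r_0}+2^{-jr_0}$ does not vanish for $j$ near $0$ or near $Na_n$, you must supplement it with the $\epsilon/3$ argument: the middle of the sum is uniformly small by geometric decay, while the finitely many head and tail terms stabilize individually, which is where both halves of the hypothesis on the $W_n$ (agreement in the first $n$ \emph{and} last $n$ digits) enter. Both routes are valid; the paper's pairing is more economical and fully quantitative, whereas your version makes more transparent exactly why the two-sided hypothesis on the $W_n$ is needed and why no single comparison to $\eta_w$ can suffice near the ends of the sum.
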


\begin{proof}
	Let
	\[
	A_n := \int_{z_0}^{\gamma_W^{} z_0} f \eta_{W_n}^{}, \  
	B_n := \int_{z_0}^{\gamma_W^{a_n} z_0} f \eta_{W^{a_n}W_n}^{} - a_n N(W) \widetilde{f} (w).
	\]
	Fix positive integers $ m \le n $ such that $ a_m \le a_n $.
	By Lemma \ref{lem:tilde_f} and Lemma \ref{lem:int_trans}, we have
	\[
	B_n - B_m
	= \int_{z_0}^{\gamma_W^{} z_0} f \left( 
	\sum_{1 \le i \le a_n} \eta_{W^{i} W_n W^{a_n-i}}^{}
	- \sum_{1 \le i \le a_m} \eta_{W^{i} W_m W^{a_m-i}}^{}
	- (a_n - a_m) \eta_{W}^{} \right).
	\]
	This is equal to
	\[
	- \int_{z_0}^{\gamma_W^{} z_0} f \left( 
	S_1(m,n,z) + S_2(m,n,z) - S_1'(m,n,z) - S_2'(m,n,z)
	\right) dz
	\]
	where
	\begin{align*}
	S_1(m,n,z) &:= \sum_{1 \le i \le a_m} \left(
	\frac{1}{z - [\overline{W^{i} W_n W^{a_n-i}}]} - \frac{1}{z - [\overline{W^{i} W_m W^{a_m-i}}]}
	\right), \\
	S_1'(m,n,z) &:= \sum_{1 \le i \le a_m} \left(
	\frac{1}{z - [\overline{W^{a_n-i} W_n W^{i}}]'} - \frac{1}{z - [\overline{W^{a_m-i} W_m W^{i}}]'}
	\right), \\
	S_2(m,n,z) &:= \sum_{a_m < i \le a_n} \left(
	\frac{1}{z - [\overline{W^{i} W_n W^{a_n-i}}]} - \frac{1}{z - w}
	\right), \\
	S_2'(m,n,z) &:= \sum_{a_m < i \le a_n} \left(
	\frac{1}{z - [\overline{W^{a_n-i} W_n W^{i}}]'} - \frac{1}{z - w'}
	\right). 
	\end{align*}
	
	We evaluate these four sums.
	Let $ y_0 := \min\{ \ImNew(z_0), \ImNew(\gamma_W^{} z_0) \} $.
	
	The continued fraction expansions of $ [\overline{W^{i} W_n W^{a_n-i}}] $ and $ [\overline{W^{i} W_m W^{a_m-i}}] $ share at least first $ i+m $ terms since $ W_m $ and $ W_n $ share first $ m $ terms by assumption.
	Thus we have
	\[
	| S_1(m,n,z) |
	\le \sum_{1 \le i \le a_m} \frac{2^{2-i-m}}{y_0^2} 
	= \frac{2^{2-m} - 2^{2-m-a_m}}{y_0^2}
	\]
	by Lemma \ref{lem:abs_eval}.
	
	By Lemma 1.28 in \cite{Aig}, the continued fraction expansions of $ -[\overline{W^{a_n-i} W_n W^{i}}]' $ and $ -[\overline{W^{a_m-i} W_m W^{i}}]' $ share at least first $ i+m $ terms since $ W_m $ and $ W_n $ share last $ m $ terms by the assumption.
	Thus we obtain 
	\[
	| S_1'(m,n,z) |
	\le \frac{2^{2-m} - 2^{2-m-a_m}}{y_0^2}
	\]
	by the same argument in Lemma \ref{lem:abs_eval}.
	
	Also we have
	\[
	| S_2(m,n,z) |
	\le \frac{2^{2-a_n} - 2^{2-a_m}}{y_0^2}, \ 
	| S_2'(m,n,z) |
	\le \frac{2^{2-a_n} - 2^{2-a_m}}{y_0^2}.
	\]
	
	Let $ C $ be the maximal value of $ f $ in the contour for the integration.
	As seen in the discussion above, we obtain
	\[
	|B_n - B_m|
	\le 2C |\gamma_W^{} z_0 - z_0| \frac{2^{2-m} - 2^{2-m-a_m} + 2^{2-a_n} - 2^{2-a_m}}{y_0^2}.
	\]
	Since the right hand side converges to 0 as $ m, n \to \infty $, $ \{B_n\}_{n \ge 1} $ is a Cauchy sequence.
	
	We can prove similarly that $ \{A_n\}_{n \ge 1} $ is a Cauchy sequence.
\end{proof}

%

\begin{proof}[Proof of Theorem \ref{thm:main}.]
	Keep the notation in Theorem \ref{thm:main} and let 
	\[
	W^{(n)} := V_0 W_1^{a_n^{(1)}} V_1 W_2^{a_n^{(2)}} V_2 \cdots W_k^{a_n^{(k)}} V_k.
	\]
	Then we have $ w^{(n)} = [\overline{W^{(n)}}] $.
	It follows from Lemma \ref{lem:int_trans} that
	\begin{align*}
		&N(W^{(n)}) \widetilde{f}(w^{(n)}) \\
		&= \sum_{0 \le i \le k} \int_{z_0}^{\gamma_{V_i}^{} z_0} f \eta_{V_i W_{i+1}^{a_n^{(i+1)}} \cdots V_k V_0 W_1^{a_n^{(1)}} V_1 \cdots W_i^{a_n^{(i)}}}^{} 
		+ \sum_{1 \le i \le k} \int_{z_0}^{\gamma_{W_i}^{a_n^{(i)}} z_0} f \eta_{ W_{i}^{a_n^{(i)}}V_i \cdots V_k V_0 W_1^{a_n^{(1)}} V_1 \cdots V_{i-1}}^{}.
	\end{align*}
	By Lemma \ref{lem:Cauchy_seq}, this can be written as
	\[
	\sum_{1 \le i \le k} a_n^{(i)} N(W_i) \widetilde{f}(w_i) + O(1)
	\]
	as $ n \to \infty $.
	Similarly we have
	\[
	N(W^{(n)}) \widetilde{1}(w^{(n)})
	= \sum_{1 \le i \le k} a_n^{(i)} N(W_i) \widetilde{1}(w_i) + O(1)
	\]
	by substituting $ f=1 $.
	
	Putting everything together we have
	\begin{align*}
	\lim_{n \to \infty} f(w^{(n)}) 
	&= \lim_{n \to \infty} \frac{N(W^{(n)}) \widetilde{f}(w^{(n)})}{N(W^{(n)}) \widetilde{1}(w^{(n)})} \\
	&= \lim_{n \to \infty}\frac{a_n^{(1)} N(W_1) \widetilde{f} (w_1) + \cdots + a_n^{(k)} N(W_k) \widetilde{f} (w_k) + O(1)}
	{a_n^{(1)} N(W_1) \widetilde{1} (w_1) + \cdots + a_n^{(k)} N(W_k) \widetilde{1} (w_k) + O(1)} \\
	&= \frac{a^{(1)} N(W_1) \widetilde{f} (w_1) + \cdots + a^{(k)} N(W_k) \widetilde{f} (w_k)}
	{a^{(1)} N(W_1) \widetilde{1} (w_1) + \cdots + a^{(k)} N(W_k) \widetilde{1} (w_k)}. 
	\end{align*}
\end{proof}

\section*{Acknowledgements}
I would like to show my greatest appreciation to Professor Takuya Yamauchi for introducing me to this topic and giving many advices. I am deeply grateful to Dr.~Toshiki Matsusaka for many discussion and valuable comments.

\bibliographystyle{plain}
\bibliography{myrefs_val}

\end{document}